\documentclass[10pt,reqno]{amsart}
\usepackage{amsmath}
\usepackage{amssymb}
\usepackage{amsthm}
\usepackage{eepic,epic}
\usepackage{epsfig}
\usepackage{graphicx}
\usepackage{color}


\textheight 20 true cm \textwidth 13 true cm \voffset 1.2 true cm
\hoffset -0.5 true cm \marginparwidth 2 true cm
\parindent 0.5 true cm


\newlength{\defbaselineskip}
\setlength{\defbaselineskip}{\baselineskip}
\newcommand{\setlinespacing}[1]%
           {\setlength{\baselineskip}{#1 \defbaselineskip}}

\numberwithin{equation}{section}

\newtheorem{thm}{Theorem}[section]

\newtheorem{lem}[thm]{Lemma}
\newtheorem{prop}[thm]{Proposition}

\theoremstyle{definition}

\theoremstyle{remark}
\newtheorem{rem}[thm]{Remark}
\numberwithin{equation}{section}

\begin{document}

\title[Strichartz estimates with angular integrability]
{Endpoint Strichartz estimates with angular integrability and some applications}

\author{Jungkwon Kim, Yoonjung Lee and Ihyeok Seo}

\thanks{This research was supported by NRF-2022R1A2C1011312.}

\subjclass[2010]{Primary: 35B45, 35A01; Secondary: 35Q55}
\keywords{weighted estimates, well-posedness, nonlinear Schr\"odinger equations}

\address{Department of Mathematics, Sungkyunkwan University, Suwon 16419, Republic of Korea}
\email{kimjk809@skku.edu}

\email{yjglee@skku.edu}

\email{ihseo@skku.edu}

\begin{abstract}
The endpoint Strichartz estimate $\|e^{it\Delta} f\|_{L_t^2 L_x^\infty} \lesssim \|f\|_{L^2}$
is known to be false in two space dimensions. 
Taking averages spherically on the polar coordinates $x=\rho\omega$, $\rho>0$, $\omega\in\mathbb{S}^1$, Tao showed a substitute of the form 
$\|e^{it\Delta} f\|_{L_t^2L_\rho^\infty L_\omega^2} \lesssim \|f\|_{L^2}$.  
Here we address a weighted version of such spherically averaged estimates.
As an application, the existence of solutions for the inhomogeneous nonlinear Schr\"odinger equation
is shown for $L^2$ data.
\end{abstract}

\maketitle

\section{Introduction}

The physical interpretation of the Schr\"odinger equation $i\partial_tu+\Delta u=0$
is that $|u(x,t)|^2$ is the probability density for finding
a quantum particle at place $x\in\mathbb{R}^n$ and time $t\in\mathbb{R}$.
This leads us to think that $L^2(\mathbb{R}^n)$ will play a distinguished role.
Indeed, the Schr\"odinger propagator $e^{it\Delta}$, which gives a formula for the solution, is an isometry on $L^2$. That is,
$\|e^{it\Delta}f\|_{L_x^2}=\|f\|_{L^2}$ for any fixed $t$.
But interestingly, when averages on time are also made, a much richer $L^p$ integrability can be observed.
This space-time integrability known as \textit{Strichartz estimates} has been extensively studied over the last several decades and is now completely understood as follows
(see \cite{St,GV4,M-S,KT}):
\begin{equation} \label{Schclas}
\|e^{it\Delta} f\|_{L_t^q L_x^r} \lesssim \|f\|_{L^2}
\end{equation}
if and only if $(q,r)$ is \textit{Schr\"odinger-admissible}, i.e.,
$q\geq 2$, $2/q + n/r = n/2$ and $(q,r,n)\neq(2,\infty,2)$.

The endpoint case $q=2$ is known to be false in two space dimensions (\cite{M-S}),
in which case Tao \cite{T} showed a substitute of the form
\begin{equation*}
\|e^{it\Delta} f\|_{L_t^2L_\rho^\infty L_\omega^2} \lesssim \|f\|_{L^2}
\end{equation*}
by taking averages spherically on the polar coordinates $x=\rho\omega$, $\rho>0$, $\omega\in\mathbb{S}^1$.
Tao's result was extended to higher dimensions
in \cite{GLNY}.
More general spherically averaged estimates
involving $L_t^qL_\rho^r L_\omega^2$
were also studied (see \cite{GLNW,G,GHN}).

\subsection{Endpoint estimates}
In this paper we are concerned with a weighted version of the spherically averaged estimates, which involves weighted mixed norms
with the angular variable treated in a different $L^p$ space than the radial variable:
\begin{equation*}
\|f(x)\|_{L_{\rho}^r L_{\omega} ^{k}(|x|^{-r\gamma})}=
\left(\int_0^\infty \|  \rho ^{-\gamma} f(\rho \omega) \|_{L_{\omega}^{k}(\mathbb{S}^{n-1})}^{r} \rho^{n-1} d\rho \right)^{1/r}
\end{equation*}
where $1\leq r,k\leq \infty$ and $\gamma\geq0$.
Particularly when $r=k$, this norm coincides with the weighted $L^r$ norm, $\|f(x)\|_{L^r(|x|^{-r\gamma})}$.
Our first result is the following endpoint Strichartz estimates with angular integrability.

\begin{thm}\label{prop1}
Let $n \geq 3 $ and $0\leq\gamma\leq1$.
Assume that
\begin{equation}\label{prop_cond0}
\frac1r=\frac{n-2}{2n}+\frac{\gamma}{n} \quad \text{and}\quad \frac{1}{r}-\frac{\gamma}{2(n-1)}\leq\frac{1}{k}\leq \frac1r.
\end{equation}
Then we have
\begin{equation}\label{weighT}	
\| |x|^{-\gamma} e^{it\Delta} f \|_{L_t^{2} L_{\rho}^{r} L_{\omega}^{k}} \lesssim \| f \|_{L^{2}}.
\end{equation}
\end{thm}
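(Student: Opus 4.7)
The plan is to prove \eqref{weighT} by a two-parameter interpolation scheme, working separately on the radial-weight parameter $\gamma$ and the spherical integrability exponent $k$. The endpoints to be established directly are: (a) the classical Keel--Tao estimate at $(\gamma,k,r)=(0,\tfrac{2n}{n-2},\tfrac{2n}{n-2})$, to which the hypotheses \eqref{prop_cond0} reduce when $\gamma=0$ (since then the upper and lower bounds on $1/k$ coincide with $1/r$); and (b) a Kato--Yajima-type weighted endpoint at $\gamma=1$ with maximal angular gain $1/k=1/2-1/(2(n-1))$,
\[
\bigl\||x|^{-1}e^{it\Delta}f\bigr\|_{L_t^2 L_\rho^2 L_\omega^{2(n-1)/(n-2)}}\lesssim \|f\|_{L^2},
\]
which exploits the $\tfrac{1}{2}$-smoothing on $\mathbb{S}^{n-1}$ provided by the radial weight (via $H^{1/2}(\mathbb{S}^{n-1})\hookrightarrow L^{2(n-1)/(n-2)}(\mathbb{S}^{n-1})$).

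For (b) I would proceed by $TT^*$: the estimate is equivalent to the bilinear bound
\[
\Bigl|\iint \bigl\langle |x|^{-1}e^{i(t-s)\Delta}(|y|^{-1}F(s)),\,G(t)\bigr\rangle\,dt\,ds\Bigr|\lesssim \|F\|_{\mathcal{X}'}\|G\|_{\mathcal{X}'}
\]
with $\mathcal{X}=L_t^2 L_\rho^2 L_\omega^{2(n-1)/(n-2)}$. Expanding $e^{it\Delta}$ in spherical harmonics block-diagonalizes the kernel into modified Hankel transforms at each angular frequency $\ell$, on which one can work scalar by scalar. Dyadic decomposition in $|t-s|\sim 2^j$, combined with the dispersive bound $\|e^{it\Delta}\|_{L^1\to L^\infty}\lesssim |t|^{-n/2}$, weighted Hardy--Littlewood--Sobolev in the radial variables, and Sobolev embedding on $\mathbb{S}^{n-1}$, converts the two factors $|x|^{-1},|y|^{-1}$ into the required $H^{1/2}_\omega$ gain. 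A Keel--Tao-style bilinear interpolation inside each dyadic piece then delivers the borderline $L_t^2$ integrability.

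With (a) and (b) in hand, Stein's complex interpolation along the analytic family $T_z f=|x|^{-z}e^{it\Delta}f$ sweeps out the curve $\gamma\in[0,1]$ with $k$ at the upper boundary, while real interpolation on the sphere (Hölder between $L^r_\omega$ and the maximal $L^k_\omega$, fibered over $\rho$) covers the remaining admissible values of $k$ in \eqref{prop_cond0}. The main obstacle I expect is the $L_t^2$ endpoint in step (b): the time integral is scale-invariant and borderline, so the dyadic pieces cannot be summed absolutely but must be combined via the Keel--Tao atomic/bilinear scheme, which in turn demands uniform-in-$\ell$ dispersive control on the Hankel kernels. A secondary subtlety, when passing from (a) and (b) to intermediate $\gamma$, is to verify that the weighted estimate in (b) has the analyticity in $z$ needed to legitimately insert it into the Stein interpolation.
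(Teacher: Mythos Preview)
Your overall architecture matches the paper's: establish the two endpoints $\gamma=0$ (classical Keel--Tao) and $\gamma=1$ (a weighted $L_t^2 L_\rho^2$ estimate with angular gain), use the Sobolev embedding $H^{1/2}(\mathbb{S}^{n-1})\hookrightarrow L^{2(n-1)/(n-2)}(\mathbb{S}^{n-1})$ to convert the $\tfrac12$-derivative gain into the $L_\omega^k$ improvement, and interpolate in $\gamma$. That skeleton is correct and is exactly what the paper does.

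The gap is in your proposed proof of endpoint (b). The estimate
\[
\bigl\||x|^{-1}e^{it\Delta}f\bigr\|_{L_t^2 L_\rho^2 L_\omega^k}\lesssim\|f\|_{L^2}
\]
is a Kato-smoothing statement, not a Strichartz-type one, and it is \emph{not} reachable through the dispersive bound $|t|^{-n/2}$ and Keel--Tao bilinear interpolation. The Keel--Tao scheme works by interpolating between the energy estimate ($r=2$) and dispersion ($r=\infty$); at the target $r=2$ there is no gap to exploit, so a dyadic decomposition in $|t-s|$ gives pieces that neither sum absolutely nor interpolate to anything. Your diagnosis of the ``main obstacle'' (borderline $L_t^2$ requiring the atomic/bilinear scheme) is thus a misreading of the situation. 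The paper instead quotes Hoshiro's estimate
\[
\|\Lambda^{1/2}|x|^{-1}e^{it\Delta}f\|_{L_t^2 L_x^2}\lesssim\|f\|_{L^2},\qquad \Lambda=(1-\Delta_\omega)^{1/2},
\]
which is proved by Plancherel in $t$ (reducing to a uniform-in-$\tau$ resolvent bound) followed by the spherical-harmonic/Hankel decomposition you mention and Bessel-function asymptotics---no time-dyadic or dispersive input appears. With this in hand, the Sobolev embedding on $\mathbb{S}^{n-1}$ immediately yields the whole segment $\frac{n-2}{2(n-1)}\le\frac1k\le\frac12$ at $\gamma=1$, and ordinary interpolation with the Keel--Tao endpoint gives the full triangle; the proof is three lines. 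Your secondary worry about analyticity for Stein interpolation is also a non-issue, since $z\mapsto|x|^{-z}$ is manifestly analytic with controlled norms on the strip, but in any case plain Stein--Weiss interpolation of weighted $L^p$ spaces suffices here.
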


\begin{rem}\label{rem}
One can also trivially obtain further estimates \eqref{weighT}
for $(1/r, 1/k)$ contained in the closed quadrangle with vertices $A,D,C,B$ in Figure \ref{fig1},
using the inclusion of $L^k$ spaces on the compact set $\mathbb{S}^{n-1}$.
As will be seen later (Section \ref{sec4}), this trivial region is also needed for
obtaining some applications to nonlinear equations described below.
\end{rem}

We shall give more details about the region of $(1/r, 1/k)$ for which the theorem holds;
the region is given by the closed triangle with vertices $A,E,D$.
Especially when $\gamma\rightarrow0$, $(1/r, 1/k)$ goes to the point $A$ and \eqref{weighT} boils down to the endpoint case $q=2$ of the classical estimates \eqref{Schclas}. The segment $[E,D]$ corresponds to the case $\gamma=1$.
The lower and upper bounds of $1/k$ in \eqref{prop_cond0} determine the segments $[A,E]$ and $[A,D]$, respectively.


\begin{figure}
	\centering	\includegraphics[width=0.75\textwidth]{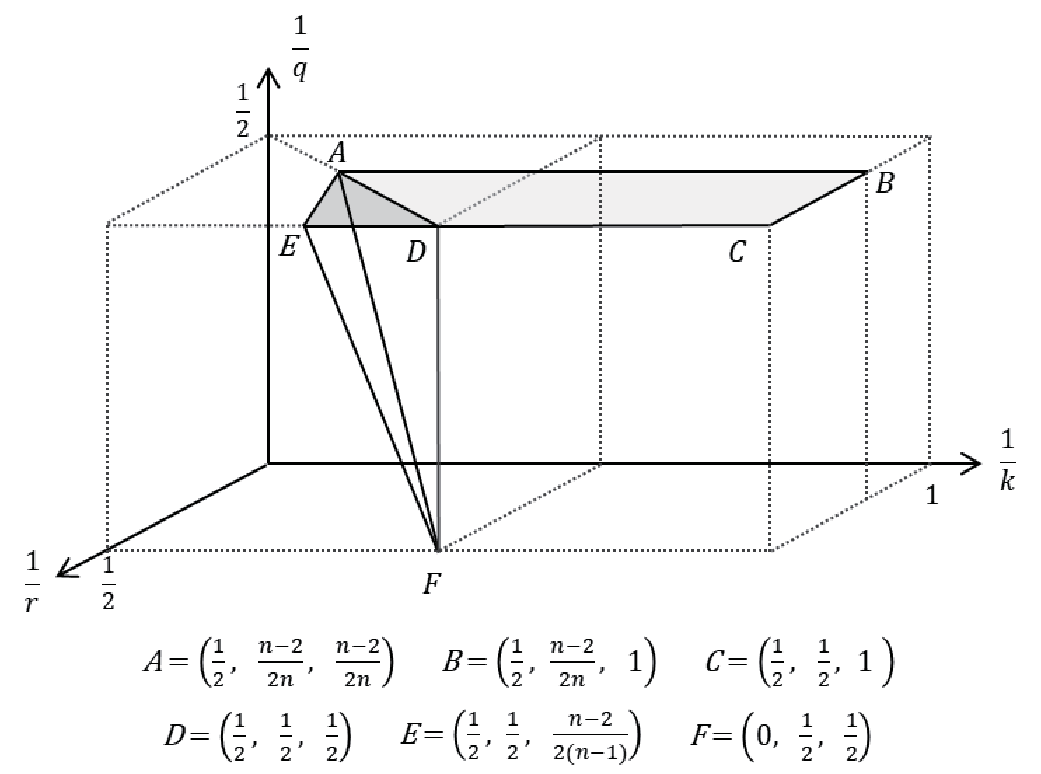}
	 \caption{The range of $(1/r, 1/k)$ for which \eqref{weighT} holds.}
\label{fig1}
\end{figure}


In view of interpolation between \eqref{weighT} and the trivial estimate
$\|e^{it\Delta} f\|_{L_t^\infty L_x^2} \lesssim \|f\|_{L^2}$ (point $F$),
we have
\begin{equation}\label{weighT2}	
\| |x|^{-\gamma} e^{it\Delta} f \|_{L_t^{q} L_{\rho}^{r} L_{\omega}^{k}} \lesssim \| f \|_{L^{2}}
\end{equation}
for $(q,r,k)$ contained in the closed tetrahedron with vertices $A,E,D,F$ with
\begin{equation}\label{gamma-admiss}
\gamma=\frac{2}{q}-n(\frac{1}{2}-\frac{1}{r}).
\end{equation}
This simply recovers the previous result 
of Ozawa and Rogers \cite{OR} in which the non-endpoint case where $q>2$ in \eqref{weighT2} was obtained using Pitt's inequalities, weighted versions of the Hausdorff-Young inequality.
In this regard, the theorem fills the gap $q=2$ in this result.
We also refer the reader to \cite{FW,JWY,CHO} for some related works on more general estimates with angular regularity.
For the optimality of  \eqref{weighT2} particularly when $r=k$, we refer the reader to \cite{LS2}.

\subsection{Applications}
Now we would like to give some applications of the endpoint estimates \eqref{weighT} to the inhomogeneous nonlinear Schr\"odinger equation (INLS)
\begin{equation}\label{INLS}
\begin{cases}
i \partial_{t} u + \Delta u =\lambda  |x|^{-\alpha} |u|^{\beta} u,\quad (x,t) \in \mathbb{R}^n \times \mathbb{R}, \\
u(x, 0)=u_0(x)\in L^2,
\end{cases}
\end{equation}
where $0<\alpha<2$, $\beta>0$ and $\lambda=\pm1$.
Here, the case $\lambda = 1$ is \textit{defocusing}, while the case $\lambda=-1$ is \textit{focusing}.
This model arises in nonlinear optics and plasma physics
for the propagation of laser beams in an inhomogeneous medium (\cite{B,TM}).
The equation enjoys the scale-invariance $u(x,t)\mapsto u_{\delta}(x,t)= \delta^{\frac{2-\alpha}{\beta}} u(\delta x, \delta^2 t)$
for $\delta>0$, and
$$\|u_{{\delta}, 0}\|_{L^2} = \delta^{\frac{2-\alpha}{\beta}-\frac{n}{2}}\|u_0 \|_{L^2}$$
with rescaled initial data $u_{{\delta}, 0}$.
If $\beta=(4-2\alpha)/n$, the scaling preserves the $L^2$ norm of $u_0$
and \eqref{INLS} goes by the name of the mass-critical (or $L^2$-critical) INLS.

This critical case remained unsolved and was recently solved by the authors \cite{KLS}.
(For initial data in other Sobolev spaces $H^s$, see e.g. \cite{GS,Fa,GU,D,CHL,LS}.)
But here, we provide more information on the solution with respect to angular integrability.

To apply the endpoint estimates \eqref{weighT} to the nonlinear problems, one needs inhomogeneous estimates which can be obtained by the standard Christ–Kiselev lemma. However, the lemma misses the following double $L_t^2$ type estimates:

\begin{thm}\label{Prop1}
	Let $n \geq 3 $ and $0<\gamma<1$.
Then we have
	\begin{equation}\label{weighTT*}
		\bigg\| \int_{0} ^{t} e^{i(t-s)\Delta} F(s)ds \bigg\|_{L_t^{2}L_\rho^{r} L_w ^{k}(|\cdot|^{-r\gamma})} \lesssim
		\| F \|_{L_t^{2}L_\rho^{r'} L_w^{\tilde{k}'}(|\cdot|^{r'\gamma})}
	\end{equation}
if 
\begin{equation}\label{prop_cond}
	\frac1r=\frac{n-2}{2n}+\frac{\gamma}{n} 
	\quad\text{and}\quad  \frac{1}{r}-\frac{\gamma}{2(n-1)}<\frac{1}{k}<\frac1r\leq\frac{1}{\tilde{k}}\leq1.
\end{equation}	
\end{thm}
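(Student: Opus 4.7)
The plan is to adapt the Keel--Tao bilinear interpolation argument \cite{KT} to the present weighted angular setting, since the Christ--Kiselev lemma is unavailable when both time exponents equal $2$. By duality, \eqref{weighTT*} is equivalent to the bilinear bound
\begin{equation*}
\left| \iint_{s<t} \bigl\langle e^{-is\Delta} F(s),\, e^{-it\Delta} G(t) \bigr\rangle \, ds\, dt \right|
\lesssim \|F\|_{L_t^2 L_\rho^{r'} L_\omega^{\tilde{k}'}(|x|^{r'\gamma})}\, \|G\|_{L_t^2 L_\rho^{r'} L_\omega^{k'}(|x|^{r'\gamma})}.
\end{equation*}
I would then dyadically decompose the integration region $\{s<t\}$ as $\sum_{j\in\mathbb{Z}} B_j(F,G)$, where $B_j$ restricts to $2^{j} \le t-s < 2^{j+1}$, and aim to establish, for $(a,b)$ ranging in a small neighborhood of the target pair $(\tilde k, k)$, a family of bilinear estimates
\begin{equation*}
|B_j(F,G)| \lesssim 2^{-j\beta(a,b)}\, \|F\|_{L_t^2 L_\rho^{r'} L_\omega^{a'}(|x|^{r'\gamma})}\, \|G\|_{L_t^2 L_\rho^{r'} L_\omega^{b'}(|x|^{r'\gamma})},
\end{equation*}
with $\beta(a,b)>0$ at the target.

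These dyadic bounds will come from combining two ingredients. The first is the homogeneous endpoint estimate \eqref{weighT} of Theorem \ref{prop1}, applied on each side to absorb the propagators $e^{\pm it\Delta}$ into $L^2$. The second is a weighted angular dispersive estimate for $e^{i\tau\Delta}$ between mixed norms $L_\rho^r L_\omega^a$, which furnishes the required negative power of $|\tau|$. Once these two ingredients are available for the off-target pairs, bilinear real interpolation around the target yields a strictly positive $\beta$ at $(\tilde k, k)$ itself, and the geometric series $\sum_{j} 2^{-j\beta}$ sums to produce the desired retarded estimate.

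The main obstacle will be the precise formulation and proof of the weighted angular dispersive estimate, and in particular verifying that the interpolation exponent $\beta(\tilde k, k)$ is strictly positive. The strict inequalities $\frac{1}{r}-\frac{\gamma}{2(n-1)} < \frac{1}{k} < \frac{1}{r} \le \frac{1}{\tilde k}$ in \eqref{prop_cond} supply exactly the margin needed: they guarantee that the auxiliary pairs $(a,b)$ used in the interpolation remain inside the triangle of Theorem \ref{prop1} (and its dual), and they ensure that the $|\tau|$-gains produced on each side combine to a strictly positive exponent at the target. Equality at either $\frac{1}{k}=\frac{1}{r}$ or $\frac{1}{k}=\frac{1}{r}-\frac{\gamma}{2(n-1)}$ would collapse this margin and make the geometric summation fail, which is why these boundary cases are excluded from the statement.
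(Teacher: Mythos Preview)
Your overall strategy---duality to a bilinear form, dyadic decomposition of $\{s<t\}$, and Keel--Tao bilinear real interpolation---matches the paper's. The gap is in your choice of interpolation parameter and in what you expect the interpolation to produce.

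You propose to keep the radial exponent $r'$ fixed and perturb the \emph{angular} exponents $(a,b)$ around $(\tilde k,k)$. But the time decay feeding the dyadic bounds comes from the weighted dispersive estimate
\[
\|e^{i\tau\Delta} g\|_{L_\rho^{a} L_\omega^{b}(|\cdot|^{-a\gamma})} \lesssim |\tau|^{-n(\frac12-\frac1a)-\gamma}\,\|g\|_{L_\rho^{a'} L_\omega^{b'}(|\cdot|^{a'\gamma})},
\]
whose exponent depends on the \emph{radial} index $a$ and on $\gamma$, not on $b$. With the radial index frozen at $r$, the relation $\frac1r=\frac{n-2}{2n}+\frac{\gamma}{n}$ gives decay exactly $|\tau|^{-1}$; after the two $L_t^1\to L_t^2$ H\"older steps on intervals of length $\sim 2^j$ this yields $\beta\equiv 0$ for every choice of $(a,b)$. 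There is then nothing to interpolate, and the series $\sum_j$ does not sum. Relatedly, the assertion ``$\beta(a,b)>0$ at the target'' is inconsistent with scaling: the endpoint is precisely the locus $\beta=0$, and bilinear interpolation does not manufacture a positive $\beta$ there---it lands in $(\ell_\infty^{\beta_0},\ell_\infty^{\beta_1})_{\theta,1}=\ell_1^{0}$, which is $\ell^1$-summability at $\beta=0$.

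What the paper does instead is fix the angular exponent $k$ and perturb the \emph{radial} exponents to $r_0,r_1$ with $\tfrac1{r_0}=\tfrac1r-\varepsilon$, $\tfrac1{r_1}=\tfrac1r+2\varepsilon$, so that $\beta(r_0,r_0)=n\varepsilon>0$ and $\beta(r_0,r_1)=\beta(r_1,r_0)=-\tfrac{n\varepsilon}{2}<0$; bilinear interpolation with $\theta_0=\theta_1=\tfrac13$ then gives the $\ell_1^0$ bound. The strict inequalities on $1/k$ enter for a different, more technical reason than you suggest: in identifying $(L_\rho^{r_0'}L_\omega^{k_0'},\,L_\rho^{r_1'}L_\omega^{k_1'})_{1/3,2}$ one cannot use the vector-valued identity (since $r'\neq 2$) and must take $k_0=k_1=k$; a single $k$ satisfies the admissibility constraint simultaneously for both perturbed radial exponents $r_0,r_1$ only when $1/k$ lies \emph{strictly} inside $\bigl(\tfrac1r-\tfrac{\gamma}{2(n-1)},\,\tfrac1r\bigr)$.
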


Here, the region of $(1/r,1/k)$ and $(1/r, 1/\tilde{k})$ is given by the open triangle with vertices $A,E,D$ and 
the closed quadrangle with vertices $A,D,C,B$
without the boundaries $[A,B],[D,C]$, respectively.
The weights $|x|^{-\gamma}$ in the weighted estimates allow us to handle the singularity $|x|^{-\alpha}$ in the nonlinearity effectively. 
As a result, we obtain the following well-posedness result.

\begin{thm}\label{cor}
Let $n \geq 3$, $0 < \alpha < 2 $ and $\beta = (4-2\alpha)/n$.
If $\|u_0\|_{L^2}$ is assumed to be small, then there exists a unique solution to \eqref{INLS}
$$u \in C_t([0,\infty) ; L_x^2) \cap L_t^{2}([0,\infty) ; L_{\rho}^{r}L_{\omega}^{k}(|x|^{-\alpha r/2}))$$
for $(r,k)$ satisfying 
\begin{equation}\label{condition22}
 \frac{1}{r}=\frac{n-2}{2n}+\frac{\alpha}{2n}
 \quad\text{and}\quad \frac{1}{r}-\frac{\alpha}{4(n-1)}<\frac{1}{k} <\frac{1}{r}.
\end{equation}	
Furthermore, the solution scatters in $L^2$, i.e., there exists $\varphi\in L^2$ such that
$$\lim_{t\rightarrow\infty}\|u(t)-e^{it\Delta}\varphi\|_{L_x^2}=0.$$
\end{thm}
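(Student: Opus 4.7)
\emph{Strategy.} Set $\gamma=\alpha/2$, so $0<\gamma<1$, and let
$$Y = L_t^{2}\bigl([0,\infty);L_\rho^{r}L_\omega^{k}(|x|^{-r\gamma})\bigr), \qquad Z = L_t^{\infty}\bigl([0,\infty);L_x^{2}\bigr),\qquad X=Y\cap Z.$$
The plan is to apply the Banach fixed-point theorem to the Duhamel map
$$\Phi(u)(t) = e^{it\Delta}u_0 - i\lambda\int_0^t e^{i(t-s)\Delta}\bigl(|x|^{-\alpha}|u|^{\beta}u\bigr)(s)\,ds$$
on a small closed ball in $X$. Condition \eqref{condition22} is precisely \eqref{prop_cond0} with $\gamma=\alpha/2$ and $(1/r,1/k)$ in the strict interior of the triangle $A,E,D$, so Theorem \ref{prop1} combined with the $L^2$-isometry yields the homogeneous bound $\|e^{it\Delta}u_0\|_{X}\lesssim\|u_0\|_{L^2}$.

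\emph{Inhomogeneous estimates.} For the Duhamel integral in $Y$, introduce an auxiliary angular exponent $\tilde k$ by
$$\frac{1}{\tilde k}=\frac{2}{r}-\frac{1}{k}.$$
The strict inequality $1/k<1/r$ in \eqref{condition22} gives $1/\tilde k>1/r$, and the identity $2/r-1=(\alpha-2)/n<0$ forces $1/\tilde k<1$, so $(1/r,1/\tilde k)$ lies in the valid quadrangle of Theorem \ref{Prop1} (note $0<\gamma<1$). Writing $W=L_t^{2}L_\rho^{r'}L_\omega^{\tilde k'}(|x|^{r'\gamma})$, Theorem \ref{Prop1} therefore gives
$$\bigg\|\int_0^t e^{i(t-s)\Delta}F(s)\,ds\bigg\|_Y\lesssim \|F\|_W.$$
For the $Z$-component, the same range $1/\tilde k\in(1/r,1)$ is covered by the trivial extension of Theorem \ref{prop1} described in Remark \ref{rem}; duality then yields $\bigl\|\int e^{-is\Delta}F\,ds\bigr\|_{L^2}\lesssim \|F\|_W$, and the Christ--Kiselev lemma (applicable since $\infty>2$) upgrades this to the retarded estimate $\bigl\|\int_0^t e^{i(t-s)\Delta}F\,ds\bigr\|_Z\lesssim \|F\|_W$.

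\emph{Nonlinear estimate.} The pivotal step is
$$\bigl\||x|^{-\alpha}|u|^{\beta}u\bigr\|_W\lesssim \|u\|_Y\|u\|_Z^{\beta}.$$
Since the weight in $W$ supplies a factor $|x|^{\gamma}$ that combines with $|x|^{-\alpha}=|x|^{-2\gamma}$ to give $|x|^{-\gamma}$, the integrand to bound is $(|x|^{-\gamma}u)\cdot|u|^{\beta}$. Successive H\"older inequalities in $\omega$, $\rho$, and $t$ split this product as
$$\bigl\||x|^{-\gamma}u\bigr\|_{L_t^{2}L_\rho^{r}L_\omega^{k}}\cdot\bigl\||u|^{\beta}\bigr\|_{L_t^{\infty}L_\rho^{c}L_\omega^{d}}, \qquad \frac{1}{c}=1-\frac{2}{r},\ \ \frac{1}{d}=1-\frac{1}{\tilde k}-\frac{1}{k}.$$
The mass-critical relation $\beta=(4-2\alpha)/n$ gives $1-2/r=(2-\alpha)/n=\beta/2$, and the definition of $\tilde k$ gives $1-1/\tilde k-1/k=1-2/r=\beta/2$; hence $c=d=2/\beta$ and the second factor collapses to $\|u\|_{L_t^{\infty}L_x^{2}}^{\beta}=\|u\|_Z^{\beta}$. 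This arithmetic match between H\"older's requirement and the criticality of $\beta$ is the crux of the argument.

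\emph{Contraction and scattering.} Assembling the preceding bounds,
$\|\Phi(u)\|_X\le C\|u_0\|_{L^2}+C\|u\|_X^{\beta+1}$, and the pointwise estimate $\bigl||u|^\beta u-|v|^\beta v\bigr|\lesssim(|u|^{\beta}+|v|^{\beta})|u-v|$ fed into the same H\"older scheme yields
$\|\Phi(u)-\Phi(v)\|_X\lesssim (\|u\|_X+\|v\|_X)^{\beta}\|u-v\|_X$, so smallness of $\|u_0\|_{L^2}$ closes the fixed-point argument and produces the unique solution $u\in C_tL_x^{2}\cap Y$. For scattering, define
$$\varphi = u_0-i\lambda\int_0^{\infty}e^{-is\Delta}\bigl(|x|^{-\alpha}|u|^{\beta}u\bigr)(s)\,ds\in L^{2},$$
which is well-defined by the dual estimate; then $\|u(t)-e^{it\Delta}\varphi\|_{L^{2}}\lesssim \|N(u)\|_{L^{2}([t,\infty);L_\rho^{r'}L_\omega^{\tilde k'}(|x|^{r'\gamma}))}\to 0$ by dominated convergence. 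The main obstacle is the nonlinear estimate: one must check that the H\"older-forced $\tilde k$ lands inside the admissible region of Theorem \ref{Prop1}, and this is exactly where the mass-critical balance does the work.
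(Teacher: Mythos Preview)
Your proposal is correct and follows essentially the same route as the paper: you set $\gamma=\alpha/2$, choose the auxiliary angular exponent via $1/\tilde k=2/r-1/k$ (which is exactly the paper's choice, rewritten from $1/\tilde k'=\beta/2+1/k$ using $\beta/2=1-2/r$), verify that $(1/r,1/\tilde k)$ lands in the admissible quadrangle of Theorem \ref{Prop1}, and close by the same H\"older splitting that collapses the $|u|^\beta$ factor to $\|u\|_{L_t^\infty L_x^2}^\beta$. The only cosmetic difference is your use of Christ--Kiselev for the $Z$-bound; the paper instead notes that $\|e^{it\Delta}\int_0^t e^{-is\Delta}F\,ds\|_{L_x^2}=\|\int_{\mathbb R} e^{-is\Delta}(\chi_{[0,t]}F)\,ds\|_{L_x^2}$ and applies the adjoint of \eqref{weighT} directly, which avoids the lemma altogether.
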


We note in passing that similar results can be obtained by treating the non-endpoint estimates \eqref{weighT2}.
However, we do not pursue this issue here.

The paper is organized as follows.
In Section \ref{sec2}
we prove the endpoint estimates in Theorem \ref{prop1}
by making use of interpolation together with the Sobolev embedding on the unit sphere $\mathbb{S}^{n-1}$.
Section \ref{sec3} is devoted to proving the inhomogeneous estimates in Theorem \ref{Prop1}.
Here we basically adopt the bilinear interpolation argument developed by Keel and Tao \cite{KT},
but we need to modify the argument to make it applicable to the weighted setting.
In Section \ref{sec4} we finally apply the estimates to obtain Theorem \ref{cor}.

Throughout this paper, the letter $C$ stands for a positive constant which may be different
at each occurrence. We also denote $A\lesssim B$ to mean $A\leq CB$
with unspecified constants $C>0$.


\section{Endpoint estimates}\label{sec2}

In this section we prove Theorem \ref{prop1}
by interpolating between the estimates on the point $A$ and the segment $[E,D]$.
As mentioned below Remark \ref{rem}, the estimate on the point $A$ becomes equivalent to the endpoint case $q=2$ of the classical estimates \eqref{Schclas}. Hence we only need to obtain \eqref{weighT} on the segment $[E,D]$;
this is the case $r=2,\gamma=1$ corresponding to 
\begin{equation*}
	\||x|^{-1}e^{it\Delta} f\|_{L_t^2 L_{\rho}^2 L_{\omega}^k} \lesssim \| f \|_{L^2}
\end{equation*}
for $\frac{n-2}{2(n-1)} \leq \frac{1}{k} \leq \frac{1}{2}$.
From \cite{H} (see (1.2) there), we first recall 
\begin{equation}\label{we}
	\|\Lambda^{\frac{1}{2}}|x|^{-1}e^{it\Delta} f\|_{L_t^2 L_{x}^2} \lesssim \| f\|_{L^2}
\end{equation}
where $\Lambda :=\sqrt{1-\Delta_{\omega}}$ for the Laplace-Beltrami operator $\Delta_{\omega}$ on the unit sphere $\mathbb{S}^{n-1}$, $n\ge3$.
Then by \eqref{we}, it is enough to show
\begin{equation}\label{bnm}
	\||x|^{-1}e^{it\Delta} f\|_{L_t^2 L_{\rho}^2 L_{\omega}^k} \lesssim \|\Lambda^{\frac{1}{2}}|x|^{-1}e^{it\Delta} f\|_{L_t^2 L_{\rho}^2 L_{\omega}^2}
\end{equation}
for $\frac{n-2}{2(n-1)} \leq \frac{1}{k} \leq \frac{1}{2}$.
For this, we first apply the Sobolev embedding\footnote{See, for example, Lemma 7.1 in \cite{JWY}} on $\mathbb{S}^{n-1}$, 
\begin{equation}\label{em}
	\|f\|_{L_{\omega}^p} \lesssim \| \Lambda^{(n-1)(\frac{1}{2}-\frac{1}{p})}f\|_{L_{\omega}^2}, \quad 2\leq p<\infty,
\end{equation}
to see
\begin{equation}\label{bnm3}
	\||x|^{-1}e^{it\Delta} f\|_{L_t^2 L_{\rho}^2 L_{\omega}^k} \lesssim \|\Lambda^{(n-1)(\frac{1}{2}-\frac{1}{k})} |x|^{-1}e^{it\Delta} f\|_{L_t^2 L_{\rho}^2 L_{\omega}^2}
\end{equation}
for $2 \leq k < \infty$.
We then use the inclusion $L_{\omega}^{p} \subseteq L_{\omega}^2$ for $p\ge2$ and 
\eqref{em} again to get
\begin{align}\label{bnm2}
	\|\Lambda^{(n-1)(\frac{1}{2}-\frac{1}{k})} g\|_{ L_{\omega}^2}
	\lesssim \|\Lambda^{(n-1)(\frac{1}{2}-\frac{1}{k})} g\|_{ L_{\omega}^{p}}
	\lesssim \|\Lambda^{\frac{1}{2}} g\|_{L_{\omega}^{2}}
\end{align}
with $\frac{1}{p} = \frac{2n-3}{2(n-1)} - \frac{1}{k}$ and $2\leq p<\infty$.
Now the desired estimate \eqref{bnm} follows from
combining \eqref{bnm3} and \eqref{bnm2} with $g=|x|^{-1}e^{it\Delta} f$.
Note that the conditions on $k$ and $p$ here
determine $\frac{n-2}{2(n-1)} \leq \frac{1}{k} \leq \frac{1}{2}$ we wanted.

\section{Inhomogeneous estimates}\label{sec3}

Next we prove the inhomogeneous estimates \eqref{weighTT*} in Theorem \ref{Prop1}
by adopting the bilinear interpolation argument developed by Keel and Tao \cite{KT}.
We need to modify the argument to make it applicable to the present setting with weights.

Instead of \eqref{weighTT*}, we shall show a stronger estimate which is given by replacing $\int_{0}^t$ in \eqref{weighTT*} by $\int_{-\infty}^t$:
	\begin{equation}\label{proto_inhomo0}
	\bigg\| \int_{-\infty} ^{t} e^{i(t-s)\Delta} F(\cdot,s)ds \bigg\|_{L_t^{2}L_\rho^{r} L_\omega ^{k}(|\cdot|^{-r\gamma})}
	\lesssim\| F \|_{L_t^{2}L_\rho^{r'} L_\omega^{\tilde{k}'}(|\cdot|^{r' \gamma})}
\end{equation}
for $r, k, \tilde{k}, \gamma$ given as in the theorem.
To deduce \eqref{weighTT*} from \eqref{proto_inhomo0},
first decompose the $L_t^2$ norm in the left-hand side of \eqref{weighTT*}
into two parts, $t\geq0$ and $t<0$. Then the latter can be reduced to the former
by a change of variables $t\mapsto-t$, and so we only need to consider the first part $t\geq0$.
But, since $[0,t)=(-\infty,t)\cap[0,\infty)$, by applying \eqref{proto_inhomo0} with $F$ replaced by $\chi_{[0,\infty)}(s)F$,
the first part follows directly.

Then by duality, we may show the following bilinear form estimate
\begin{equation}\label{endpt}
		| T(F,G) | \lesssim \left\|F \right\|_{L_t^{2} L_\rho^{r'} L_{\omega}^{k'}(|\cdot|^{r'\gamma})}  \left\| G \right\|_{L_t^{2} L_\rho^{r'} L_{\omega}^{k'}(|\cdot|^{r'\gamma})}
	\end{equation}
where
$$ T(F, G) := \int_{-\infty}^\infty \int_{s<t} \left\langle e^{-i s \Delta} F(s), e^{-i t \Delta} G(t) \right\rangle_{x}  ds dt.$$
Here, $\langle \cdot\,,\cdot\rangle$ denotes the usual inner product on $L^2$.
Indeed, by duality, \eqref{endpt} implies
\eqref{proto_inhomo0} with $\tilde{k}'$ replaced by $k'$. Since $k'\leq\tilde{k}'$, \eqref{proto_inhomo0} then follows.  
By duality and symmetry, we also note that \eqref{endpt} implies  
\begin{equation*}
	\bigg\| \int_{-\infty} ^{\infty} e^{i(t-s)\Delta} F(\cdot,s)ds \bigg\|_{L_t^{2}L_\rho^{r} L_\omega ^{k}(|\cdot|^{-r\gamma})}
	\lesssim\| F \|_{L_t^{2}L_\rho^{r'} L_\omega^{k'}(|\cdot|^{r' \gamma})}
\end{equation*}
with the same $\gamma,r,k$, from which we can alternatively prove \eqref{weighT} on the open triangle with vertices $A,E,D$ in Figure \ref{fig1} using the $TT^*$ argument.

Let us now show \eqref{endpt}. We first decompose the integral region
$\Omega=\{ (s, t)\in \mathbb{R}^2 :s<t \}$
dyadically away from the singularity $t=s$.
Indeed, we break $\Omega$ into a series of time-localized regions using a Whitney type decomposition (see \cite{S} or \cite{F});
let $\mathcal{Q}_j$ be the family of dyadic squares in $\Omega$ whose side length is dyadic number $2^j$ for $j\in \mathbb{Z}$.
Each square $Q =I\times J \in \mathcal{Q}_j$ has the property that
\begin{equation}\label{whitney}
 2^j\sim |I| \sim |J| \sim \textnormal{dist}(I, J)
\end{equation}
and $\Omega = \cup_{j\in \mathbb{Z}} \cup_{Q \in \mathcal{Q}_j}Q$ where the squares $Q$ are essentially disjoint.
Now we may write
	\begin{equation*}
	T(F, G) =\sum_{j \in \mathbb{Z}} T_j(F,G),
	\end{equation*}
where
	\begin{equation*}
		T_j(F,G) : = \sum_{Q \in \mathcal{Q}_j} \int_{t \in J} \int_{s \in I} \left\langle e^{-i s \Delta} F( s), e^{-i t \Delta} G( t) \right\rangle_x  ds dt.
	\end{equation*}
We then obtain the desired estimate \eqref{endpt} by making use of the bilinear interpolation between its time-localized estimates in the following proposition which will be proved later.

\begin{prop} \label{homo_local}
Let $n \ge 3$ and $0 < \gamma<1$. Assume that $2 \leq a, \tilde{a} < \infty$,
\begin{equation}\label{condi_local}
\frac{1}{a} -\frac{\gamma}{2(n-1)} \leq \frac{1}{b} \leq \frac{1}{a} \quad \textnormal{and} \quad
\frac{1}{\tilde{a}}-\frac{\gamma}{2(n-1)} \leq \frac{1}{\tilde{b}} \leq \frac{1}{\tilde{a}}.
\end{equation}
Then we have
	\begin{equation}\label{local}
	|T_j (F,G)| \lesssim 2^{-j\beta(a,\tilde{a})}\|F\|_{L_t^{2} L_\rho^{a'}L_{\omega}^{b'}(|\cdot|^{a'\gamma}) } \|G\|_{L_t^{2} L_\rho^{\tilde{a}'}L_{\omega}^{\tilde{b}'}(|\cdot|^{\tilde{a}'\gamma})}
	\end{equation}
for all $j \in \mathbb{Z}$ and all $(\frac {1}{a}, \frac {1}{\tilde{a}})$ in a neighborhood of $(\frac{1}{r}, \frac{1}{r})$ (see Figure \ref{fig2})
with $$\frac1r = \frac{n-2}{2n}+\frac{\gamma}{n}\quad\text{and}\quad\beta(a,\tilde{a})= -1 + \frac{n}{2} - \frac{n}{2a} - \frac{n}{2\tilde{a}} +\gamma.$$
\end{prop}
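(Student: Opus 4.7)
The plan is to follow the bilinear interpolation scheme of Keel and Tao, modified to accommodate the weights $|x|^{a'\gamma}$, $|x|^{\tilde a'\gamma}$ and the angular exponents $b'$, $\tilde b'$. The key point is that on each Whitney square $Q = I \times J \in \mathcal{Q}_j$ the temporal distance $|t-s|$ is comparable to $2^j$, so the dispersive estimate $\|e^{i(t-s)\Delta}\|_{L_x^1 \to L_x^\infty} \lesssim |t-s|^{-n/2}$ supplies a factor $\sim 2^{-jn/2}$ that drives the $j$-dependence in $\beta(a, \tilde a)$.

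At the central point $(1/a, 1/\tilde a) = (1/r, 1/r)$ a direct computation using $1/r = (n-2)/(2n) + \gamma/n$ gives $\beta(r, r) = 0$, and \eqref{local} there reduces to the bilinear form of the homogeneous estimate \eqref{weighT}. Indeed, by the $TT^*$ identity applied to Theorem~\ref{prop1},
$$\Bigl|\int\!\!\int \langle e^{i(t-s)\Delta} F(s), G(t)\rangle\,ds\,dt\Bigr| \lesssim \|F\|_{L_t^2 L_\rho^{r'} L_\omega^{k'}(|\cdot|^{r'\gamma})} \|G\|_{L_t^2 L_\rho^{r'} L_\omega^{\tilde k'}(|\cdot|^{r'\gamma})},$$
and localizing $F$ to $I$, $G$ to $J$ yields \eqref{local} with no $2^j$ factor (i.e.\ the case $\beta=0$) on every $Q \in \mathcal{Q}_j$.

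To obtain an open neighborhood of $(1/r, 1/r)$ in $(1/a, 1/\tilde a)$, I would next prove two further reference corner estimates by combining the dispersive bound on $Q$ with Cauchy--Schwarz on the intervals $I, J$ of length $\sim 2^j$, together with the spherical Sobolev embedding \eqref{em} on $\mathbb{S}^{n-1}$ to handle the angular exponents. The embedding \eqref{em} is precisely what trades $L^{b'}_\omega$ integrability for $\Lambda$-regularity in $L^2_\omega$, and the range \eqref{condi_local} on $1/b, 1/\tilde b$ is exactly the one required for it to apply with the prescribed weight power. Bilinear complex interpolation between the central estimate and these off-center corner estimates then sweeps out \eqref{local} for all $(1/a, 1/\tilde a)$ in an open neighborhood of $(1/r, 1/r)$ with the stated linear exponent $\beta(a, \tilde a)$. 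Finally, summing the single-square bounds over $Q \in \mathcal{Q}_j$ via Cauchy--Schwarz and the almost-disjointness of the dyadic intervals $\{I\}, \{J\}$ at each scale produces \eqref{local} itself.

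The main obstacle is inserting the weights $|x|^{a'\gamma}, |x|^{\tilde a'\gamma}$ and the spherical norms into the off-center corner estimates so that the resulting decay exponent is exactly the $\beta(a, \tilde a)$ stated (note the extra $+\gamma$ compared with the unweighted Keel--Tao exponent, which must be recovered from the weights). This is also where the strict inequalities in \eqref{condi_local} and the open range $0 < \gamma < 1$ become essential: the boundary cases are borderline for the spherical Sobolev embedding and for the bilinear interpolation to cover a genuine open neighborhood around $(1/r,1/r)$.
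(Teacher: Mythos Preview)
Your outline captures the large-scale structure correctly (reduce to a single Whitney square, prove reference estimates, interpolate, then sum over $Q\in\mathcal Q_j$ by Cauchy--Schwarz), but there are two concrete gaps that prevent the argument from going through as written.

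First, the unweighted dispersive bound $\|e^{i(t-s)\Delta}\|_{L^1_x\to L^\infty_x}\lesssim|t-s|^{-n/2}$ cannot by itself produce the weighted norms $L_\rho^{a'}L_\omega^{b'}(|\cdot|^{a'\gamma})$ on the right of \eqref{local}, nor the extra $+\gamma$ in $\beta(a,\tilde a)$ that you yourself flag as unresolved. The spherical Sobolev embedding you invoke handles only the angular index and contributes nothing to the radial weight. What is actually needed is the \emph{weighted} time-decay estimate of Ozawa--Rogers,
\[
\|e^{it\Delta}u_0\|_{L_\rho^{a} L_\omega^{b}(|\cdot|^{-a\gamma})}\lesssim |t|^{-n(\frac12-\frac1a)-\gamma}\|u_0\|_{L_\rho^{a'}L_\omega^{b'}(|\cdot|^{a'\gamma})},
\]
valid for $2\le a\le b<\infty$ and $2(n-1)(\tfrac1a-\tfrac1b)\le\gamma<\tfrac{n}{a}$; this is exactly what converts the dispersion on a single square into the correct weighted bilinear decay with the $+\gamma$ already built in. The constraint $\gamma<n/a$ forbids $a=\infty$, which is why the paper places its ``dispersive'' reference point at $A=(3\gamma/8,3\gamma/8)$ rather than at the origin.

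Second, your choice of reference points cannot yield an \emph{open} neighborhood of $(1/r,1/r)$. With the center $(1/r,1/r)$ itself as one of only three reference points, interpolation covers at best a triangle having $(1/r,1/r)$ as a vertex, not as an interior point. The paper instead surrounds $(1/r,1/r)$: the diagonal point $A=(3\gamma/8,3\gamma/8)$ (via the weighted decay above) together with the two segments $\tilde a=2$, $2\le a<r$ and $a=2$, $2\le\tilde a<r$ (via the $TT^*$ form of the \emph{non-endpoint} estimate \eqref{weighT2}, which is available precisely because $0<\gamma<1$ forces $q>2$ there). Since $3\gamma/8<1/r<1/2$, the convex hull of these reference sets contains $(1/r,1/r)$ in its interior. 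Your appeal to Theorem~\ref{prop1} at the center is not circular, but it is redundant once the surrounding estimates are in place, and by itself it cannot replace any of them.
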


\begin{figure}
	\centering
	\includegraphics[width=0.4\textwidth]{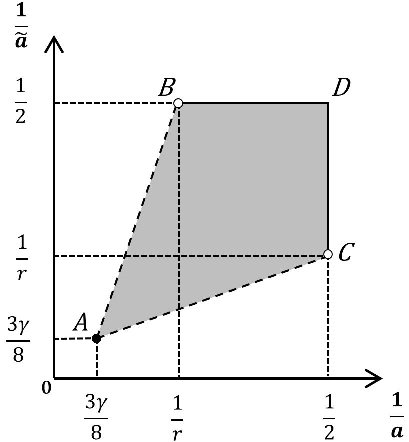}
	\caption{The range of $(1/a, 1/\tilde{a})$ for which \eqref{local} holds.}
\label{fig2}
\end{figure}

From making use of the bilinear interpolation between the estimates \eqref{local}, we shall now deduce
\begin{equation}\label{aj}
\sum_{j \in \mathbb{Z}}| T_{j}(F,G) | \lesssim \|F\|_{L_t^{2} L_\rho^{r'} L_{\omega}^{k'}(|\cdot|^{r'\gamma})}
\| G \|_{L_t^{2} L_\rho^{r'} L_{\omega}^{k'}(|\cdot|^{r'\gamma})}
	\end{equation}
which clearly implies \eqref{endpt}.

Indeed, from the proposition we have the following three estimates
	\begin{align*}
	|T_j (F,G)|  \lesssim 2^{-j\beta(r_0,r_0)}& \|F\|_{L_t^{2} L_\rho^{r_{0}'}L_{\omega}^{k_{0}'}(|\cdot|^{r_{0}'\gamma}) } \|G\|_{L_t^{2} L_\rho^{r_{0}'}L_{\omega}^{k_{0}'}(|\cdot|^{r_{0}'\gamma})}, \cr
	|T_j (F,G)|  \lesssim 2^{-j\beta(r_0,r_1)}& \|F\|_{L_t^{2} L_\rho^{r_{0}'}L_{\omega}^{k_{0}'}(|\cdot|^{r_{0}'\gamma}) } \|G\|_{L_t^{2} L_\rho^{r_{1}'}L_{\omega}^{k_{1}'}(|\cdot|^{r_{1}'\gamma})}, \cr
	|T_j (F,G)|  \lesssim 2^{-j\beta(r_1,r_0)}& \|F\|_{L_t^{2} L_\rho^{r_{1}'}L_{\omega}^{k_{1}'}(|\cdot|^{r_{1}'\gamma}) } \|G\|_{L_t^{2} L_\rho^{r_{0}'}L_{\omega}^{k_{0}'}(|\cdot|^{r_{0}'\gamma})},
	\end{align*}	
where, for a sufficiently small $\varepsilon >0$ and $i=0,1$,
\begin{equation}\label{qdg}
\frac{1}{r_0}=\frac{n-2}{2n}+\frac{\gamma}{n} -\varepsilon,\quad \frac{1}{r_{1}}=\frac{n-2}{2n}+\frac{\gamma}{n}+2\varepsilon
\quad\text{and}\quad
\frac{1}{r_i}-\frac{\gamma}{2(n-1)} \leq \frac{1}{k_i} \leq \frac{1}{r_i}.
\end{equation}
Next we define the vector-valued bilinear operator $B$ by
\begin{equation*}
B(F,G) = \left\{ T_{j} (F,G) \right\}_{j \in \mathbb{Z}}.
\end{equation*}
Then the above three estimates are rewritten as
\begin{equation*}
	\begin{aligned}
	\left\|B (F,G)\right\|_{\ell_{\infty}^{\beta_{0}}} \lesssim & \|F\|_{L_t^{2} L_\rho^{r_{0}'}L_{\omega}^{k_{0}'}(|\cdot|^{r_{0}'\gamma}) } \|G\|_{L_t^{2} L_\rho^{r_{0}'}L_{\omega}^{k_{0}'}(|\cdot|^{r_{0}'\gamma})}, \cr
	\left\|B (F,G)\right\|_{\ell_{\infty}^{\beta_{1}}} \lesssim & \|F\|_{L_t^{2} L_\rho^{r_{0}'}L_{\omega}^{k_{0}'}(|\cdot|^{r_{0}'\gamma}) } \|G\|_{L_t^{2} L_\rho^{r_{1}'}L_{\omega}^{k_{1}'}(|\cdot|^{r_{1}'\gamma})}, \cr
	\left\|B (F,G)\right\|_{\ell_{\infty}^{\beta_{1}}} \lesssim & \|F\|_{L_t^{2} L_\rho^{r_{1}'}L_{\omega}^{k_{1}'}(|\cdot|^{r_{1}'\gamma}) } \|G\|_{L_t^{2} L_\rho^{r_{0}'}L_{\omega}^{k_{0}'}(|\cdot|^{r_{0}'\gamma})},
	\end{aligned}
\end{equation*}	
respectively, with $\beta_{0} = \beta(r_0,r_0)$ and $\beta_{1} =\beta(r_0,r_1)=\beta(r_1,r_0)$.
Here, $\ell_{q} ^{s}$ denotes a weighted sequence space defined for $s \in \mathbb{R}$ and $1\leq q \leq \infty$
with the norm
\begin{eqnarray*}
		\|\{x_j\}_{j \ge 0}\|_{\ell_{q}^{s}}=
		\begin{cases}
			\big(\sum_{j \ge0} 2^{jsq} |x_{j}|^{q}\big)^{1/q} \quad \text{if}\quad q \neq \infty,\\
			 \sup_{j \ge0} 2^{js}|x_{j}| \quad \text{if}\quad q=\infty.
		\end{cases}
\end{eqnarray*}
Applying the following lemma with $p=q=2$ and $\theta_0=\theta_1=1/3$, we now get
	\begin{equation}\label{fhk}
	B  :  (A_{0}, A_{1})_{\frac{1}{3},2}\times(B_{0},B_{1})_{\frac{1}{3},2}
\rightarrow\big(\ell_{\infty} ^{\beta_0}, \ell_{\infty} ^{\beta_1}\big)_{\frac{2}{3},1}
	\end{equation}
with $A_0=B_0= L_t^{2} L_\rho^{r_{0}'}L_{\omega}^{k_{0}'}(|\cdot|^{r_{0}'\gamma})$ and
$A_1=B_1=L_t^{2} L_\rho^{r_{1}'}L_{\omega}^{k_{1}'}(|\cdot|^{r_{1}'\gamma})$.

\begin{lem}[\cite{BL}, Section 3.13, Exercise 5(b)]\label{biinter}
	For $i=0,1$, let $A_i , B_i , C_i$ be Banach spaces and let $T$ be a bilinear operator such that $T: A_0 \times B_0 \rightarrow C_0$, $T : A_0 \times B_1 \rightarrow C_1$, and $T : A_1 \times B_0 \rightarrow C_1$. Then one has
	\begin{equation*}
	T : (A_0,A_1)_{\theta_0, p} \times (B_0, B_1)_{\theta_1, q} \rightarrow (C_0, C_1)_{\theta,1}
	\end{equation*}
	if $0< \theta_i < \theta=\theta_0+\theta_1 <1$ and $1/p + 1/q \geq 1$ for $1 \leq p,q \leq \infty$.
Here, $(\cdot\,,\cdot)_{\theta,p}$ denotes the real interpolation functor.
\end{lem}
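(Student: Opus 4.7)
The plan is to establish the lemma through the $J$-method characterization of the real interpolation functor, adapting the standard Lions--Peetre bilinear interpolation argument to the asymmetric three-hypothesis setting. Recall that $(A_0, A_1)_{\theta_0, p}$ consists of all $a = \sum_{k \in \mathbb{Z}} a_k$ with $a_k \in A_0 \cap A_1$ and $\big\| \{2^{-k\theta_0} J(2^k, a_k; A_0, A_1)\}_k \big\|_{\ell^p} < \infty$, where $J(t, x; A_0, A_1) := \max(\|x\|_{A_0}, t \|x\|_{A_1})$, the infimum of this quantity over all such representations being equivalent to $\|a\|_{(A_0, A_1)_{\theta_0, p}}$; an analogous representation holds for $b = \sum_m b_m$ with an $\ell^q$-weighted $J$-sum. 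I would then formally expand $T(a, b) = \sum_{k, m} T(a_k, b_m)$ and regroup this double series as $T(a, b) = \sum_\ell c_\ell$ with $c_\ell := \sum_{\max(k, m) = \ell} T(a_k, b_m)$, and verify $T(a, b) \in (C_0, C_1)_{\theta, 1}$ by bounding the $J$-norm $\sum_\ell 2^{-\ell \theta} J(2^\ell, c_\ell; C_0, C_1)$.

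The three hypotheses combine to yield the single bilinear bounds $\|T(a_k, b_m)\|_{C_0} \lesssim J(2^k, a_k) J(2^m, b_m)$ and $\|T(a_k, b_m)\|_{C_1} \lesssim \min(2^{-k}, 2^{-m}) J(2^k, a_k) J(2^m, b_m)$, via $\|a_k\|_{A_0} \le J(2^k, a_k)$ and $\|a_k\|_{A_1} \le 2^{-k} J(2^k, a_k)$, and likewise for $b_m$. The crux of the proof is that the choice $\ell = \max(k, m)$ balances the two bounds inside the definition of the output $J$-functional: since $2^\ell \min(2^{-k}, 2^{-m}) = 2^{\ell - \max(k, m)} = 1$, one obtains $J(2^\ell, T(a_k, b_m); C_0, C_1) \lesssim J(2^k, a_k) J(2^m, b_m)$. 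Summing over the pairs with $\max(k, m) = \ell$ then gives $J(2^\ell, c_\ell) \lesssim J(2^\ell, a_\ell) \sum_{m \le \ell} J(2^m, b_m) + J(2^\ell, b_\ell) \sum_{k < \ell} J(2^k, a_k)$, and after introducing the weighted sequences $\alpha_k := 2^{-k\theta_0} J(2^k, a_k) \in \ell^p$ and $\beta_m := 2^{-m\theta_1} J(2^m, b_m) \in \ell^q$ (with norms comparable to $\|a\|_{(A_0,A_1)_{\theta_0,p}}$ and $\|b\|_{(B_0,B_1)_{\theta_1,q}}$ by optimizing the $J$-representations), the target sum reduces to two convolution-type expressions $\sum_\ell \alpha_\ell (\phi \ast \beta)(\ell) + \sum_\ell \beta_\ell (\psi \ast \alpha)(\ell)$ with one-sided geometric kernels $\phi(j) = 2^{-j\theta_1} \mathbf{1}_{j \ge 0}$ and $\psi(j) = 2^{-j\theta_0} \mathbf{1}_{j > 0}$, both lying in $\ell^1$ thanks to $\theta_0, \theta_1 > 0$.

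The main obstacle will be the final $\ell^1$-type summation over $\ell$, and it is precisely at this step that the hypothesis $1/p + 1/q \ge 1$ earns its keep. Young's convolution inequality gives $\|\phi \ast \beta\|_{\ell^q} \lesssim \|\beta\|_{\ell^q}$, and then the sequence-space embedding $\ell^q \hookrightarrow \ell^{p'}$ (which holds exactly when $q \le p'$, i.e., when $1/p + 1/q \ge 1$) upgrades this to $\phi \ast \beta \in \ell^{p'}$; H\"older then pairs the result against $\alpha \in \ell^p$ to control each double sum by $\|\alpha\|_{\ell^p} \|\beta\|_{\ell^q}$, which is the desired bound. Two smaller technical points to verify in passing are the absolute convergence of the regrouped series defining $c_\ell$ inside $C_0 \cap C_1$ (so that $J(2^\ell, c_\ell)$ is well defined, which is handled by the same $J$-functional estimates that appear above after cutoff and limiting), and the strict positivity of both $\theta_0$ and $\theta_1$ needed for the summability of the kernels $\phi$ and $\psi$, which is guaranteed by the assumption $0 < \theta_i$.
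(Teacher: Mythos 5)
The paper does not prove this lemma: it is simply cited as Exercise 5(b) in Section 3.13 of Bergh--L\"ofstr\"om \cite{BL}, so there is no in-paper argument to compare against. Your proof is a correct, self-contained solution of that exercise by the standard Lions--Peetre $J$-method for bilinear interpolation, and I believe it is essentially the intended argument. The key points all check out: the three hypotheses package neatly into the single $J$-bound $J(2^{\max(k,m)}, T(a_k,b_m); C_0,C_1)\lesssim J(2^k,a_k)J(2^m,b_m)$, which motivates the regrouping along $\ell=\max(k,m)$ rather than $\ell=k+m$ (the latter is what one uses in the four-hypothesis symmetric version and would need control of $T:A_1\times B_1$); the substitution $\alpha_k=2^{-k\theta_0}J(2^k,a_k)$, $\beta_m=2^{-m\theta_1}J(2^m,b_m)$ turns the weighted $\ell^1$-sum into convolutions against one-sided geometric kernels, whose $\ell^1$-summability is exactly where $\theta_0,\theta_1>0$ enter; and the hypothesis $1/p+1/q\geq 1$ is equivalent to $q\leq p'$, which is precisely what is needed to combine Young's inequality with the nesting $\ell^q\hookrightarrow\ell^{p'}$ before applying H\"older. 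You are also right to flag, and correctly dismiss, the minor technical issue of absolute convergence of the infinite sums defining $c_\ell$ in $C_0\cap C_1$; the same geometric-kernel estimate $\sum_{m\leq\ell}2^{m\theta_1}|\beta_m|\lesssim 2^{\ell\theta_1}\|\beta\|_{\ell^q}$ settles it. The condition $\theta=\theta_0+\theta_1<1$ is used implicitly so that the target interpolation space $(C_0,C_1)_{\theta,1}$ is one of the standard real interpolation spaces to which the $J$-method equivalence applies.
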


Finally, we shall apply the real interpolation space identities in the following lemma (see \cite{C} and \cite{BL}).
\begin{lem}
Let $0<\theta<1$.
If $1\leq p_0, p_1 <\infty$ and
$1/p=(1-\theta)/p_{0} + \theta/p_{1}$, then	
\begin{equation}\label{33}
 (L^{p_0}(A_0), L^{p_1}(A_1))_{\theta, q} =
 \begin{cases}
L^{p}((A_0, A_1)_{\theta, p}) \quad\text{if}\quad q=p, \\
L^{p,q}(A) \quad\text{if}\quad A_0 =A_1=A,
\end{cases}
\end{equation}
for Banach spaces $A_0,A_1$.
If $s_0,s_1\in\mathbb{R}$, $s_{0} \neq s_{1}$ and $s=(1-\theta)s_{0} +\theta s_{1}$, then
	\begin{equation*}
	(\ell_{\infty}^{s_{0}}, \ell_{\infty}^{s_{1}})_{\theta, 1} = \ell_1^{s}.
	\end{equation*}
\end{lem}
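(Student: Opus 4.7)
The plan is to establish each identity by direct computation of Peetre's $K$-functional, since in every case the ambient spaces have enough structure to reduce the infimum defining $K(t,\cdot)$ to an explicit formula; once this formula is in hand, the real interpolation norm
$$\|x\|_{(A_0,A_1)_{\theta,q}} = \left(\int_0^\infty \bigl(t^{-\theta} K(t,x;A_0,A_1)\bigr)^q\,\frac{dt}{t}\right)^{1/q}$$
unfolds into the claimed identity after a change of variables.

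For the sequence-space identity $(\ell_\infty^{s_0}, \ell_\infty^{s_1})_{\theta,1} = \ell_1^s$, I would first solve the min-max problem
$$K(t,x) = \inf\bigl\{M + tN : M,N \ge 0,\ M\,2^{-j s_0} + N\,2^{-j s_1} \ge |x_j|\text{ for all }j\bigr\},$$
and show (taking $s_0 > s_1$ without loss of generality) that $K(t,x)$ is comparable to $\sup_j |x_j|\min(2^{j s_0}, t\,2^{j s_1})$. The interpolation integral is then evaluated by breaking $(0,\infty)$ at the dyadic thresholds $t_j = 2^{j(s_0-s_1)}$: the geometric spacing makes each $j$-contribution comparable to $2^{j s}|x_j|$ with $s = (1-\theta)s_0 + \theta s_1$, and summing produces $\|x\|_{\ell_1^s}$ up to a constant depending only on $\theta$ and $s_0-s_1$.

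For the vector-valued identity $(L^{p_0}(A_0), L^{p_1}(A_1))_{\theta,p} = L^p((A_0, A_1)_{\theta,p})$ with $q = p$, I would invoke the classical Lions-Peetre pointwise decomposition: a near-optimal splitting $f = g + h$ can be performed pointwise in $x$ once $t$ is rescaled to absorb the discrepancy between $p_0$ and $p_1$. This yields a pointwise comparison $K(t, f; L^{p_0}(A_0), L^{p_1}(A_1))^p \sim \int K(\sigma(t), f(x); A_0, A_1)^p\, d\mu(x)$ for a suitable $\sigma$; Fubini, available precisely when $q = p$, combined with the change of variables $t \mapsto \sigma(t)$, reduces the interpolation norm to the pointwise-in-$x$ norm in $(A_0, A_1)_{\theta,p}$. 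The Lorentz specialization with $A_0 = A_1 = A$ follows by reducing to the scalar identity expressing $K(t, f; L^{p_0}, L^{p_1})$ in terms of the nonincreasing rearrangement $f^*$, whose interpolation norm is the Lorentz norm $\|f\|_{L^{p,q}}$ after a standard substitution.

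The main obstacle is making the rescaling precise in the vector-valued $L^p$ identity: the $K$-functional of $L^{p_0}(A_0) + L^{p_1}(A_1)$ is not literally the pointwise $(A_0, A_1)$ $K$-functional of $f(x)$, and the equivalence depends on a level-set decomposition calibrated to $p_0, p_1$, and $\theta$. The sequence-space identity is comparatively easy because both exponents are $\infty$, so no rescaling between the ambient norms is required, and the Lorentz case collapses to a scalar computation via the rearrangement.
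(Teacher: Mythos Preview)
The paper does not prove this lemma at all; it is stated with a citation to Cwikel \cite{C} and Bergh--L\"ofstr\"om \cite{BL} and used as a black box in the bilinear interpolation argument. Your proposal, by contrast, outlines an actual proof via direct computation of the $K$-functional, which is essentially the route taken in those references (Bergh--L\"ofstr\"om, Theorems~5.1.2 and~5.6.1). So there is nothing to compare against in the paper itself; your sketch is a correct summary of the standard proofs.

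A few remarks on the sketch. For the sequence-space identity your argument is sound: the upper bound $\|x\|_{(\cdot,\cdot)_{\theta,1}} \lesssim \|x\|_{\ell_1^s}$ follows from subadditivity of $K$ and the single-entry computation, while the lower bound comes from restricting the $t$-integral to the dyadic window $t \sim 2^{j(s_0-s_1)}$ on which the $j$-th coordinate nearly attains the supremum. For the vector-valued $L^p$ identity your description is accurate in spirit but vague on the key step: the precise statement is that $K(t,f;L^{p_0}(A_0),L^{p_1}(A_1))$ is controlled above and below by a weighted $L^p$-type integral of the pointwise $K$-functional $K(\tau(x,t),f(x);A_0,A_1)$, and the Fubini swap that collapses this to the $L^p((A_0,A_1)_{\theta,p})$ norm indeed requires $q=p$. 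The Lorentz case $A_0=A_1=A$ reduces to the scalar Holmstedt-type formula for $K(t,f;L^{p_0},L^{p_1})$ in terms of $f^*$, exactly as you say. None of this is needed for the paper's purposes, which simply quote the identities, but your outline would serve as a self-contained appendix if one were desired.
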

Indeed, applying the lemma, we first see
$
(\ell_{\infty} ^{\beta_0}, \ell_{\infty} ^{\beta_1})_{\frac{2}{3},\,1} = \ell_{1} ^{0}
$
and
\begin{equation*}
\big(L_t^{2} L_\rho^{r_{0}'} L_{\omega}^{k_{0}'}(|\cdot|^{r_{0}' \gamma }), L_t^{2} L_\rho^{r_{1}'} L_{\omega}^{k_{1}'}(|\cdot|^{ r_{1}' \gamma})\big)_{\frac{1}{3},\,2}
= L_t^{2} \big(\big(L_\rho^{r_0'} L_{\omega}^{k_0'}(|\cdot|^{r_0'\gamma }), L_\rho^{r_1'} L_{\omega}^{k_1'}(|\cdot|^{r_1'\gamma })\big)_{\frac{1}{3},\, 2} \big).
\end{equation*}
Since $r' < 2$, the first identity in \eqref{33} does not applied inside the $L_t^2$ space any more.
Instead we will make use of the second one,
and hence we must take $k'=k_0'=k_1'$, but this is possible because \eqref{qdg} holds for a sufficiently small $\varepsilon>0$
if $$\frac{1}{r}-\frac{\gamma}{2(n-1)}<\frac{1}{k}<\frac{1}{r}$$
which is exactly consistent with that in the assumption \eqref{prop_cond}.
Since we may write
$$\|f\|_{L_{\rho}^{r'} L_\omega^{k'} (|\cdot|^{r'\gamma})}=\|F\|_{L_{\tilde{\rho}}^{r'}L_\omega^{k'}}$$
with $\tilde{\rho}=\rho^n/n$ and $F(\tilde{\rho}, \omega)=(n\tilde{\rho})^{\gamma/n} f((n \tilde{\rho})^{1/n} \omega)$,
we are indeed reduced to showing
$$L_t^{2} \big( \big(L_{\tilde{\rho}}^{r_0'} L_{\omega}^{k'}, L_{\tilde{\rho}}^{r_1'} L_{\omega}^{k'}\big)_{\frac{1}{3},\, 2} \big)=
L_t^{2}L_{\tilde{\rho}}^{r',2} L_{\omega}^{k'}\supset L_t^2 L_{\tilde{\rho}}^{r'}L_{\omega}^{k'}$$
which follows directly from applying the second identity in \eqref{33} and then using the embedding property of Lorentz spaces,
$L_{\tilde{\rho}}^{r'} \subset L_{\tilde{\rho}}^{r', 2}$ for $r'< 2$.
Combining \eqref{fhk} with the resulting real interpolation spaces, we now get
 \begin{align*}
 	B :  L_t^{2} L_\rho^{r'} L_{\omega}^{k'}(|\cdot|^{r'\gamma })  \times L_t^{2} L_\rho^{r'} L_{\omega}^{k'}(|\cdot|^{r'\gamma })
 \rightarrow \ell_{1} ^{0}
 \end{align*}
 which is equivalent to the desired estimate \eqref{aj}. This completes the proof.

\subsection{Time-localized estimates}
This subsection is devoted to proving the time-localized estimate \eqref{local} in Proposition \ref{homo_local}.
Let us first set
\begin{equation}\label{cnm}
T_{j,Q}(F,G):=\int_{t \in J} \int_{s \in I} \big\langle e^{-i s \Delta} F( s),  e^{-i t \Delta} G( t) \big\rangle_x ds dt
\end{equation}
for each square $Q =I\times J \in \mathcal{Q}_j$.
Then we only need to show
\begin{align}\label{cube_sum}
|T_{j,Q}(F, G)| \lesssim 2^{-j\beta( a,\tilde{a})}
\|F\|_{L_t^{2}(I; L_\rho^{a'}L_{\omega}^{b'}(|\cdot|^{a'\gamma}))}
\|G\|_{L_t^{2}(J; L_\rho^{\tilde{a}'}L_{\omega}^{\tilde{b}'}(|\cdot|^{\tilde{a}'\gamma}))}
\end{align}
to get \eqref{local}.
Using the fact that for each $I$ there are at most a fixed finite number of intervals $J$ which satisfy \eqref{whitney} and they are all contained
in a neighborhood of $I$ of size $O(2^j)$, we indeed get
\begin{align*}
 \sum_{Q \in \mathcal{Q}_j} \left|T_{j,Q}(F, G) \right| \lesssim &2^{-j\beta(a,\tilde{a})}\sum_{Q \in \mathcal{Q}_j}
\|F\|_{L_t^{2}(I; L_\rho^{a'}L_{\omega}^{b'}(|\cdot|^{a'\gamma}))}
\|G\|_{L_t^{2}(I; L_\rho^{\tilde{a}'}L_{\omega}^{\tilde{b}'}(|\cdot|^{\tilde{a}'\gamma}))} \cr
 \leq  2^{-j\beta(a, \tilde{a})} &\Big(\sum_{Q \in \mathcal{Q}_j }
\|F \|_{L_t^{2}(I; L_\rho^{a'}L_{\omega}^{b'}(|\cdot|^{a'\gamma}))}^{2}  \Big)^{\frac{1}{2}}
\cdot \Big(\sum_{Q \in \mathcal{Q}_j}
\|G\|_{L_t^{2}(I; L_\rho^{\tilde{a}'}L_{\omega}^{\tilde{b}'}(|\cdot|^{\tilde{a}'\gamma}))}^{2} \Big)^{\frac{1}{2}}\cr
\lesssim  2^{-j\beta(a, \tilde{a})}&
\|F\|_{L_t^{2}(\mathbb{R}; L_\rho^{a'}L_{\omega}^{b'}(|\cdot|^{a'\gamma}))}
\|G\|_{L_t^{2}(\mathbb{R}; L_\rho^{\tilde{a}'}L_{\omega}^{\tilde{b}'}(|\cdot|^{\tilde{a}'\gamma}))}
\end{align*}
as desired.

From now on, we shall show \eqref{cube_sum} for the following exponents (see Figure \ref{fig2}):
\begin{itemize}
  \item [$(a)$] $a = \tilde{a} = \frac8{3\gamma}:=\lambda$,\quad$b=\tilde{b}$\quad (point $A$),
  \item [$(b) $] $2 \leq a < r=\frac{2n}{n-2+2\gamma},\quad \tilde{a}=2$ \quad(segment $(B,D]$),
  \item [$(c) $] $a=2,\quad  2 \leq \tilde{a} < r=\frac{2n}{n-2+2\gamma}$ \quad(segment $(C,D]$),
\end{itemize}
in which $b$ and $\tilde{b}$ are also given to hold \eqref{condi_local}.
The proposition will then follow by interpolation and the fact that $2<r<\infty$.

To show the first case $(a)$, we recall the following time decay estimates (see Proposition 4.2 in \cite{OR}):
\begin{equation}\label{weighted_decay}
\left\| e^{it\Delta} u_0 \right\|_{L_{\rho}^{a} L_{\omega}^{b}(|\cdot|^{-a\gamma})} \lesssim |t|^{-n( \frac{1}{2}- \frac{1}{a})-\gamma} \left\| u_0 \right\|_{L_{\rho}^{a'}L_{\omega}^{b'}(|\cdot|^{a'\gamma})}
\end{equation}
where $ 2 \leq a \leq b < \infty$ and $2(n-1)(\frac{1}{a}-\frac{1}{b}) \leq \gamma < \frac{n}{a}$.
Since this estimate does not hold for $a=\infty$, we cannot take the origin instead of the point $A$ in Figure \ref{fig2}.
Hence we need to carefully choose the point $A$ near the origin by observing, from the condition $\gamma< \frac{n}{a}$,
the fact that
the more nearer we take the point $A$ to the origin, the higher the admissible dimension is.
The point $A=(\frac{3\gamma}{8},\frac{3\gamma}{8})$ would suffice to cover all dimensions $n \ge 3$.
Now we use H\"{o}lder's inequality and \eqref{weighted_decay} to obtain
	\begin{align*}
	|T_{j,Q} (F,G)| &\leq \int_{J} \int_{I} \| \rho^{-\gamma}e^{i(t-s) \Delta} F(s) \|_{L_\rho^{\lambda} L_{\omega}^{b}} \| \rho^{\gamma} G(t) \|_{L_\rho^{\lambda'} L_{\omega}^{{b}'}}\,  ds dt \\
& \lesssim \int_{J}  \int_{I} |t-s|^{-n(\frac{1}{2}-\frac{1}{\lambda})-\gamma} \left\| F(s) \right\|_{L_\rho^{\lambda'} L_{\omega}^{{b}'}(|\cdot|^{\lambda'\gamma})} \left\| G(t) \right\|_{L_\rho^{\lambda'} L_{\omega}^{{b}'}(|\cdot|^{\lambda'\gamma})}dsdt \\
&\lesssim 2^{-jn(\frac{1}{2}-\frac{1}{\lambda})-j\gamma} \int_{J} \int_{I}  \|F(s)\|_{L_\rho^{\lambda'} L_{\omega}^{{b}'}(|\cdot|^{\lambda'\gamma})} \|G(t)\|_{L_\rho^{\lambda'} L_{\omega}^{{b}'}(|\cdot|^{\lambda'\gamma})}dsdt \\
&\leq 2^{-jn(\frac{1}{2}-\frac{1}{\lambda})-j\gamma}  \int_{I} \| F \|_{L_\rho^{\lambda'} L_{\omega}^{{b}'}(|\cdot|^{\lambda'\gamma})} ds \cdot \int_{J} \| G \|_{L_\rho^{\lambda'} L_{\omega}^{{b}'}(|\cdot|^{\lambda'\gamma})}dt,
\end{align*}
where $\lambda \leq b < \infty$ and $2(n-1)(1/\lambda-1/b) \leq \gamma $ are required. 
This requirement is the same as the condition \eqref{condi_local} with $a=\lambda$.
 By applying H\"{o}lder's inequality again in each $t$ and $s$, we get
	\begin{equation*}
	|T_{j,Q} (F,G)| \lesssim 2^{-j\beta(\lambda,\lambda)} \|F\|_{L_t^2(I; L_\rho^{\lambda'} L_{\omega}^{{b}'}(|\cdot|^{\lambda'\gamma}))} \|G\|_{L_t^2(J; L_\rho^{\lambda'} L_{\omega}^{{b}'}(|\cdot|^{\lambda'\gamma}))},
	\end{equation*}
as desired.

Now it remains to show the second case $(b)$. (The case $(c)$ is shown clearly in the same way.)
By bringing the $s$-integration inside the inner product in \eqref{cnm}
and then applying H\"{o}lder's inequality in $\omega$, $\rho$ and $t$ in turn,
we first see that
	\begin{align}\label{second_case}
	\nonumber|T_{j,Q}(F, G)|
	&\leq \int_{J} \left| \left\langle \int_{I}  e^{i(t-s)\Delta}F( s) ds , G( t) \right\rangle_{x} \right| dt \\
	&\leq  \left\|  \int_{\mathbb{R}}  e^{i(t-s)\Delta} \chi_{I}(s) F(s) ds \right\|_{L_t^{\tilde{q}} L_{\rho}^{2} L_{\omega}^{\tilde{b}}(|\cdot|^{-2\gamma})} \| G \|_{L_t^{\tilde{q}'} (J; L_{\rho}^{2} L_{\omega}^{\tilde{b}'}(|\cdot|^{2\gamma}))}
	\end{align}	
where  $\tilde{q} >2$ is given so that \eqref{gamma-admiss} holds for $(q,r)=(\tilde{q},2)$.
Then by using the $TT^*$ version of \eqref{weighT2}, we have
\begin{equation}\label{TT*}
	\bigg\| \int_{\mathbb{R}} e^{i(t-s) \Delta} \chi_{I}(s)  F( s)ds \bigg\|_{L_t^{\tilde{q}}L_{\rho}^{2}L_{\omega}^{\tilde{b}}(|\cdot|^{-2\gamma})} \lesssim \| F \|_{L_t^{{q}'}(I;L_{\rho}^{a'}L_{\omega}^{{b}'}(|\cdot|^{a'\gamma}))}
\end{equation}
for $q >2$ given so that \eqref{gamma-admiss} holds for $(q,r)=(q,a)$.
(Note here that there can exist such $q,\tilde{q}>2$ for $0<\gamma<1$.)
By combining \eqref{second_case} and \eqref{TT*}, we now get
\begin{equation*}
 |T_{j,Q}(F, G)| \lesssim \| F \|_{L_t^{{q}'}(I;L_{\rho}^{a'}L_{\omega}^{{b}'}(|\cdot|^{a'\gamma}))}
\| G \|_{L_t^{\tilde{q}'} (J; L_{\rho}^{2} L_{\omega}^{\tilde{b}'}(|\cdot|^{2\gamma}))}.
\end{equation*}	
Using H\"older's inequality in $t$ since $q'<2$, and then using the identity \eqref{gamma-admiss}
for $(q,r)=(q,a)$, we estimate
\begin{align*}
 \left\|  F  \right\|_{L_t^{q'}(I; L_{\rho}^{a'} L_{\omega}^{b'}(|\cdot|^{a'\gamma}))}
 &\lesssim 2^{j(\frac{1}{2}-\frac{1}{q})} \| F \|_{L_t^{2} (I; L_{\rho}^{a'} L_{\omega}^{b'}(|\cdot|^{a'\gamma}))} \cr
 &= 2^{ j\left(\frac{1}{2}-\frac{n}{2}\left( \frac{1}{2}-\frac{1}{a} \right) -\frac{\gamma}{2}\right) } \| F \|_{L_t^{2} (I; L_{\rho}^{a'} L_{\omega}^{b'}(|\cdot|^{a'\gamma}))}
 \end{align*}
and similarly
 \begin{align*}
\| G \|_{L_t^{\tilde{q}'} (J; L_{\rho}^{2} L_{\omega}^{\tilde{b}'}(|\cdot|^{2\gamma}))} \lesssim 2^{j(\frac{1}{2}-\frac{\gamma}{2})} \| G\|_{L_t^{2} (J; L_{\rho}^{2} L_{\omega}^{\tilde{b}'}(|\cdot|^{2\gamma}))}.
 \end{align*}
Therefore, we get
$$|T_{j,Q}(F, G)| \lesssim 2^{-j\beta(a,2)} \| F \|_{L_t^{2} (I; L_{\rho}^{a'} L_{\omega}^{b'}(|\cdot|^{a'\gamma}))}
 \| G\|_{L_t^{2} (J; L_{\rho}^{2} L_{\omega}^{\tilde{b}'}(|\cdot|^{2\gamma}))}$$
as desired.

\section{The well-posedness in $L^2$}\label{sec4}
In this section we prove Theorem \ref{cor} making use of the weighted Strichartz estimates in Theorems \ref{prop1} and \ref{Prop1}.

\subsection{Nonlinear estimates}
The following nonlinear estimates play a key role in the proof.

\begin{lem}\label{nonlem}
Let $n \ge 3$, $0< \alpha < 2$ and $ \beta =(4-2\alpha)/n$.
Assume that the exponents $(1/r,1/k)$ satisfy all the conditions given as in Theorem \ref{cor}.
Then there exist certain exponents $(1/r, 1/\tilde{k})$
in the open quadrangle with vertices $A,D,C,B$ in Figure \ref{fig1},
 for which
\begin{equation*}
\big\| |x|^{-\alpha} |u|^{\beta}v\big \|_{L_t^{2}([0,\infty); L_\rho^{r'}
L_\omega^{\tilde{k}'}(|\cdot|^{r'\gamma}) )}
\leq \| u \|_{L_{t}^{\infty}([0,\infty); L_x^2)}^{\beta}
\| v \|_{L_{t}^{2}([0,\infty);L_{\rho}^{r}L_{\omega}^{k}(|\cdot|^{-r\gamma}))}
\end{equation*}
holds with $\gamma=\alpha/2$.
\end{lem}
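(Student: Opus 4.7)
The plan is to reduce both sides to a common weight $|x|^{-\alpha/2}$ and then chain three Hölder inequalities with a single careful choice of intermediate exponents. Using the definition of the weighted mixed norm together with $\gamma = \alpha/2$, the left-hand side equals
\[
\| |u|^{\beta} \,(|x|^{-\alpha/2} v)\|_{L_t^2 L_\rho^{r'} L_\omega^{\tilde{k}'}},
\]
while the weighted factor on the right is $\||x|^{-\alpha/2} v\|_{L_t^2 L_\rho^{r} L_\omega^{k}}$. The strategy is therefore to place $|x|^{-\alpha/2} v$ directly into $L_t^2 L_\rho^{r} L_\omega^{k}$ and to absorb $|u|^{\beta}$ into a complementary $L_t^\infty L_\rho^{q} L_\omega^{p}$ space.

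Concretely, I would apply Hölder successively in $\omega$ with $1/\tilde{k}' = 1/p + 1/k$, in $\rho$ (against $\rho^{n-1}\,d\rho$) with $1/r' = 1/q + 1/r$, and in $t$ with $1/2 = 1/\infty + 1/2$. The only free parameters are $p$ and $q$. The key choice is
\[
p = q = 2/\beta,
\]
which gives $\||u|^{\beta}\|_{L_\rho^{q} L_\omega^{p}} = \|u\|_{L_x^2}^{\beta}$ exactly on each time slice, and hence $\||u|^{\beta}\|_{L_t^\infty L_\rho^{q} L_\omega^{p}} = \|u\|_{L_t^\infty L_x^2}^{\beta}$ after taking $L_t^\infty$.

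Two bookkeeping checks close the argument. First, $1/r' = 1/q + 1/r$ with $q = 2/\beta$ and $\beta = (4-2\alpha)/n$ forces $1/r = 1/2 - \beta/4 = (n-2)/(2n) + \alpha/(2n)$, which is precisely the scaling condition on $r$ in Theorem \ref{cor}. Second, $1/\tilde{k}' = 1/p + 1/k$ yields $1/\tilde{k} = 1 - \beta/2 - 1/k = 2/r - 1/k$, so $1/\tilde{k} - 1/r = 1/r - 1/k$.

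It remains to check that $(1/r, 1/\tilde{k})$ actually lands in the open quadrangle $A,D,C,B$. Since $0 < \alpha < 2$, $1/r \in ((n-2)/(2n), 1/2)$ strictly, so the vertical sides $[A,B]$ and $[D,C]$ are avoided. The hypothesis $1/r - \alpha/(4(n-1)) < 1/k < 1/r$ in Theorem \ref{cor} gives $1/\tilde{k} > 1/r$ strictly (the point lies strictly above the diagonal $[A,D]$), while $1/\tilde{k} < 1$ is automatic since $\beta/2 + 1/k > 0$. The main -- essentially the only -- delicate point is spotting that the single choice $p = q = 2/\beta$ simultaneously produces the $L^2$ identification for the $u$-factor, reproduces the scaling $1/r = (n-2)/(2n) + \alpha/(2n)$ from the hypothesis, and leaves an admissible $\tilde{k}$ in the required region; once this coincidence is recognized, the proof collapses to three successive Hölder inequalities and the definition of the weighted mixed norm.
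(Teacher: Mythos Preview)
Your proposal is correct and follows essentially the same route as the paper: both set $\gamma=\alpha/2$, apply H\"older with the splitting $1/r'=\beta/2+1/r$ and $1/\tilde{k}'=\beta/2+1/k$ (your $p=q=2/\beta$), and then verify that the resulting $(1/r,1/\tilde{k})$ lies in the required open quadrangle. Your presentation is slightly more explicit in separating the H\"older steps in $\omega$, $\rho$, $t$, but the argument is the same.
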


\begin{proof}[Proof of Lemma \ref{nonlem}]
Let us first write the conditions on $(r,k;\gamma)$ as
\begin{equation}\label{qpk}
 0 < \gamma < 1,\quad \gamma=1 -n\left(\frac{1}{2} - \frac{1}{r}\right),\quad 0< \frac{1}{r} - \frac{1}{k} < \frac{\gamma}{2(n-1)},
\end{equation}
and set, for $1<\tilde{k}<\infty$,
\begin{equation}\label{qpktilde}
\frac{1}{r} - \frac{1}{\tilde{k}}<0
\end{equation}
so that
$(1/r, 1/\tilde{k})$
lies in the open quadrangle with vertices $A,D,C,B$.

We then let
\begin{equation}\label{setting}
 \frac{1}{r'}=\frac{\beta}{2}+\frac{1}{r}, \quad \frac{1}{\tilde{k}'}=\frac{\beta}{2}+\frac{1}{k} \quad \textnormal{and} \quad \gamma-\alpha=-\gamma
\end{equation}
with which we use H\"older's inequality to obtain
\begin{align*}
	\begin{split}
\left\| |x|^{-\alpha} |u|^{\beta} v \right\|_{L_t^{2}([0,\infty) ; L_{\rho}^{r'}L_{\omega}^{\tilde{k}'}(|\cdot|^{r' \gamma}))}
& = \left\| |x|^{\gamma-\alpha} |u|^{\beta} v \right\|_{L_t^{2}([0,\infty) ; L_{\rho}^{r'}L_{\omega}^{\tilde{k}'})} \cr
& =  \left\| |x|^{-\gamma} |u|^{\beta} v \right\|_{L_t^{2}([0,\infty) ; L_{\rho}^{\frac{2r}{r\beta+2}}L_{\omega}^{\frac{2k}{k \beta+2}})} \cr
& \leq \left\| u \right\|_{L_{t}^{\infty}([0,\infty); L_{x}^{2})}^{\beta} \| v \|_{L_{t}^{2}([0,\infty); L_{\rho}^{r}L_{\omega}^{k}(|\cdot|^{-r\gamma}))}
	\end{split}
\end{align*}
as desired in the lemma.

Now we only need to check the condition \eqref{condition22}.
Using the first and second conditions of \eqref{setting},
it is not difficult to check that \eqref{qpktilde} may be replaced by the last one in \eqref{qpk}.
We insert $\gamma=\alpha/2$ from the last one in \eqref{setting} into \eqref{qpk}.
Then it follows that
\begin{equation}\label{44}
0<\alpha<2, \quad \frac{1}{r}=\frac{n-2}{2n}+\frac{\alpha}{2n},\quad \frac{1}{r}-\frac{\alpha}{4(n-1)}<\frac{1}{k} <\frac{1}{r}
\end{equation}
where the last two conditions are exactly the same as in \eqref{condition22}.
Note here that the second condition of \eqref{44} combined with the first one of \eqref{setting} implies $\beta=(4-2\alpha)/n$.
\end{proof}

\subsection{Contraction mapping}
By Duhamel's principle, the solution of the Cauchy problem \eqref{INLS} can be written as
\begin{equation}\label{Duhamel}
	\Phi(u) := e^{it \Delta} u_0 - i\lambda  \int_{0}^{t} e^{i(t-s) \Delta}F(u)ds
\end{equation}
where $F(u) = |\cdot|^{-\alpha} |u(\cdot,s)|^{\beta}u(\cdot, s)$.
For suitable values of $ M, N >0$, we suffice to show that $\Phi$ defines a contraction on
\begin{align*}
X(T,M, N) = \Big\{ u \in & C_t([0,\infty) ; L_x^2) \cap L_t^2 ([0,\infty) ; L_{\rho}^r L_{\omega}^{k}(|\cdot|^{- \alpha r/2}) : \cr
 & \sup_{t \in [0,\infty)} \| u \|_{L_x^2} \leq N,\ \| u \|_{L_t^2([0,\infty) ; L_{\rho}^r L_{\omega}^{k}(|\cdot|^{- \alpha r/2}))} \leq M\Big\}
\end{align*}
equipped with the metric
\begin{equation*}
d(u,v) = \sup_{t \in [0,\infty)} \| u-v \|_{L_x^2} + \| u- v \|_{L_t^2([0,\infty); L_{\rho}^r L_{\omega}^k(|\cdot|^{-\alpha r/2 }))}
\end{equation*}
where $(r,k)$ is given as in Theorem \ref{cor}.
To do so, we need some inhomogeneous estimates to control the Duhamel term in \eqref{Duhamel}.
In our case, the estimates in Theorem \ref{Prop1} are enough.

Now we show that $\Phi$ is well defined on $X$. In other words, for $ u \in X$
\begin{equation*}
\sup_{t \in [0,\infty)} \| \Phi(u) \|_{L_x^2}\leq N\quad\text{and}\quad \| \Phi(u) \|_{L_t^2([0,\infty); L_{\rho}^r L_{\omega}^k(|\cdot|^{- \alpha r/2}))} \leq M.
\end{equation*}
Using Plancherel's theorem, the adjoint version of \eqref{weighT} for $\gamma=\alpha/2$ combined with Remark \ref{rem}, and Lemma \ref{nonlem} in turn, we see
\begin{align*}
\sup_{t \in[0,\infty)} \| \Phi(u) \|_{L_x^2}
& \leq C \| u_0 \|_{L^2} + C \bigg\| \int_{-\infty}^{\infty} e^{-is\Delta} \chi_{[0,t]}(s) F(u)ds \bigg\|_{L_x^2} \cr
& \leq C \| u_0 \|_{L^2} + C \| F(u) \|_{L_t^{2}([0,\infty); L_\rho^{r'} L_\omega^{\tilde{k}'}(|\cdot|^{ \alpha r' /2}))} \cr
& \leq C \| u_0 \|_{L^2} + CN^{\beta}M.
\end{align*}
On the other hand,
applying \eqref{weighTT*} with $\gamma=\alpha/2$ to \eqref{Duhamel}, and then using Lemma \ref{nonlem}, we obtain
\begin{align*}
\| \Phi(u) &\|_{L_t^2([0,\infty); L_{\rho}^r L_{\omega}^k(|\cdot|^{- \alpha r/2 }))}\\
 &\leq \| e^{it\Delta}u_0 \|_{L_t^2([0,\infty) ; L_{\rho}^r L_{\omega}^{k}(|\cdot|^{- \alpha r/2}))} + C \| F(u) \|_{L_t^{2} ( [0,\infty); L_\rho^{r'}L_\omega^{\tilde{k}'}(|\cdot|^{ \alpha r'/2})}\\
&\leq \| e^{it\Delta}u_0 \|_{L_t^2([0,\infty); L_{\rho}^r L_{\omega}^{k}(|\cdot|^{- \alpha r/2}))} +  C N^{\beta}M.
\end{align*}
Here we observe that
\begin{equation*}
\| e^{it\Delta}u_0 \|_{L_t^2([0,\infty); L_{\rho}^r L_{\omega}^{k}(|\cdot|^{- \alpha r/2}))} \leq \varepsilon
\end{equation*}
for some sufficiently small $\varepsilon > 0$ chosen later, if  $\| u_0 \|_{L^2}$ is small (see \eqref{weighT} with $\gamma=\alpha/2$).
We therefore get $ \Phi(u) \in X$ for $u \in X$ if
\begin{equation}\label{pop}
C\| u_0 \|_{L^2} + CN^{\beta}M \leq N \quad \textnormal{and} \quad \varepsilon + CN^{\beta}M \leq M.
\end{equation}

Next we show that $\Phi$ is a contraction. Namely, for $u,v \in X$
\begin{equation*}
d(\Phi(u), \Phi(v)) \leq \frac{1}{2}d(u,v).
\end{equation*}
As before, we see
\begin{align*}
d(\Phi(u), \Phi(v)) & = \sup_{t \in [0,\infty)} \| \Phi(u) - \Phi(v) \|_{L_x^2} + \| \Phi(u) - \Phi(v) \|_{L_t^2([0,\infty); L_{\rho}^r L_{\omega}^{k}(|\cdot|^{- \alpha r/2}))} \cr
&\leq C \| F(u) - F(v) \|_{L_t^{2}([0,\infty); L_\rho^{r'} L_\omega^{\tilde{k}'}(|\cdot|^{\alpha r'/2}))}.
\end{align*}
We will show
\begin{equation*}
\| F(u) - F(v) \|_{L_t^{2}([0,\infty); L_\rho^{r'} L_\omega^{\tilde{k}'}(|\cdot|^{ \alpha r'/2}))}
\leq CN^\beta \| u-v \|_{L_{t}^{2}([0,\infty); L_{\rho}^{r}L_{\omega}^{k}(|\cdot|^{-\alpha r/2}))}
\end{equation*}
which is reduced to showing
\begin{equation*}
\left\| |x|^{-\alpha} |u|^{\beta} |u-v| \right\|_{L_t^{2}([0,\infty); L_{\rho}^{r'} L_{\omega}^{\tilde{k}'}(|\cdot|^{ \alpha r'/2}))} \leq N^\beta \left\| u-v \right\|_{L_t^{2}([0,\infty); L_{\rho}^{r} L_{\omega}^{k}(|\cdot|^{- \alpha r/2}))}
\end{equation*}
and
\begin{equation}\label{bysym22}
\| |x|^{-\alpha} |v|^{\beta}|u-v|\|_{L_t^{2}([0,\infty); L_{\rho}^{r'} L_{\omega}^{\tilde{k}'}(|\cdot|^{\alpha r'/2}))} \leq N^{\beta}\| u-v \|_{L_{t}^{2}([0,\infty); L_{\rho}^{r}L_{\omega}^{k}(|\cdot|^{- \alpha r/2}))}.
\end{equation}
by the following simple inequality
\begin{equation*}
(|u|^{\beta}u - |v|^{\beta}v) \leq C (|u|^{\beta}+|v|^{\beta})|u-v|.
\end{equation*}
We apply Lemma \ref{nonlem} with $v$ replaced by $|u-v|$ to obtain
\begin{align*}
\| |x|^{-\alpha} |u|^{\beta} &|u-v|\|_{L_t^{2}([0,\infty); L_{\rho}^{r'} L_{\omega}^{\tilde{k}'}(|\cdot|^{\alpha r'/2}))} \cr
&\leq  \| u \|_{L_{t}^{\infty}([0,\infty); L_{x}^{2})}^{\beta} \| u-v \|_{L_{t}^{2}([0,\infty); L_{\rho}^{r}L_{\omega}^{k}(|\cdot|^{-\alpha r/2}))} \cr
& \leq N^{\beta}\| u-v \|_{L_{t}^{2}([0,\infty); L_{\rho}^{r}L_{\omega}^{k}(|\cdot|^{-\alpha r/2}))}.
\end{align*}
Similarly, we get \eqref{bysym22}.
Hence we obtain
\begin{align*}
d(\Phi(u), \Phi(v)) &\leq CN^{\beta} \| u-v \|_{L_t^2([0,\infty) ; L_{\rho}^r L_{\omega}^{k}(|\cdot|^{- \alpha r/2}))} \cr
& \leq CN^{\beta} d(u,v).
\end{align*}
Now by taking $N = 2C\| u_0 \|_{L^2}$ and $M = 2\varepsilon$
and then choosing $\varepsilon >0$ small enough such that \eqref{pop} holds and $CN^\beta \leq 1/2$,
it follows that $\Phi$ is a contraction on $X$.

Finally, we show the scattering property.
Using \eqref{Duhamel} and following the argument above, one can easily see that
\begin{align*}
\| e^{-it_2 \Delta} u(t_2) - e^{-it_1 \Delta} u(t_1) \|_{L_x^2} & = \bigg\| \int_{t_1}^{t_2} e^{-is\Delta} F(u)ds \bigg\|_{L_x^2} \cr
& \lesssim \| F(u) \|_{L_t^{2}([t_1,t_2] ; L_{\rho}^{r'} L_{\omega}^{\tilde{k}'}(|\cdot|^{ \alpha r'/2}))} \cr
& \lesssim \| u \|_{L_t^{\infty}([t_1,t_2] ; L_{x}^2)}^{\beta} \| u \|_{L_t^2([t_1,t_2] ; L_{\rho}^r L_{\omega}^{k}(|\cdot|^{- \alpha r/2}))}\quad\rightarrow\quad0
\end{align*}
as $t_1, t_2 \rightarrow \infty$.
This yields that $$ \varphi := \lim_{t \rightarrow \infty} e^{-it\Delta} u(t)$$ exists in $L^2$.
In addition, one has $$ u(t) - e^{it\Delta}\varphi = i\lambda \int_{t}^{\infty} e^{i(t-s)\Delta}F(u)ds, $$ and therefore
\begin{align*}
\| u(t) - e^{it\Delta}\varphi \|_{L_x^2} &= \bigg\| \int_{t}^{\infty} e^{i(t-s)\Delta} F(u) ds \bigg\|_{L_x^2}\\
& \lesssim \| F(u) \|_{L_t^{2}([t, \infty) ; L_{x}^{r'} L_{\omega}^{\tilde{k}'}(|\cdot|^{\alpha r'/2}))} \\
& \lesssim \| u \|_{L_t^{\infty}([t, \infty) ; L_{x}^2)}^{\beta} \| u \|_{L_t^2([t, \infty) ; L_{\rho}^r L_{\omega}^{k}(|\cdot|^{- \alpha r/2}))}\quad\rightarrow\quad0
\end{align*}
as $ t \rightarrow \infty$. This completes the proof.

\subsubsection*{Acknowledgment.} The authors would like to thank Yonggeun Cho for informing us
of the paper \cite{GHN}.

\subsubsection*{Data availability statement.}
Data sharing not applicable to this article as no datasets were generated or analysed during the current study.


\end{document}